\documentclass[10pt]{article}
\textwidth= 5.00in
\textheight= 7.4in
\topmargin = 30pt
\evensidemargin=0pt
\oddsidemargin=55pt
\headsep=17pt
\parskip=.5pt
\parindent=12pt
\font\smallit=cmti10

\usepackage{amssymb,latexsym,amsmath,epsfig,amsthm} 

\usepackage{amsthm}
\usepackage{mathtools}
\usepackage{tikz}
\usepackage[T1]{fontenc}
\usepackage{textcomp}
\usepackage{diagbox}

\usepackage{colortbl}
\usepackage{multirow}
 \usepackage{xcolor}
\usepackage{color}
\usepackage{float}
\usepackage{soul}
\definecolor{labelkey}{rgb}{0,0.08,0.45}
\definecolor{refkey}{rgb}{0,0.6,0.0}

\definecolor{Brown}{rgb}{0.45,0.0,0.05}
\definecolor{lime}{rgb}{0.00,0.8,0.0}
\definecolor{lblue}{rgb}{0.5,0.5,0.99}
\definecolor{lblue}{rgb}{0.8,0.85,1.00}
\definecolor{anotherblue}{rgb}{.8, .8,1}
\definecolor{violet}{rgb}{0.9,0.6,0.9}
\definecolor{greenyellow}{rgb}{0.53,0.99,0.18}

\definecolor{Lyellow}{rgb}{0.87,0.87,0.87}
\definecolor{Lgray}{rgb}{0.92,0.92,0.92}
\definecolor{Mgray}{rgb}{0.5,0.5,0.5}
\definecolor{Gold}{rgb}{0.99,0.84,0.0}

\usepackage{empheq}
\usepackage{siunitx}

\graphicspath{{./figures/}}
\usepackage[colorinlistoftodos,prependcaption,textsize=tiny]{todonotes}
\usepackage[shortlabels]{enumitem}
\usepackage[pdfpagelabels,colorlinks]{hyperref}
\usepackage{mathrsfs}
\definecolor{labelkey}{rgb}{0,0.08,0.45}
\definecolor{refkey}{rgb}{0,0.6,0.0}

\definecolor{Brown}{rgb}{0.45,0.0,0.05}
\definecolor{lime}{rgb}{0.00,0.8,0.0}
\definecolor{lblue}{rgb}{0.5,0.5,0.99}
\definecolor{lblue}{rgb}{0.8,0.85,1.00}
\definecolor{anotherblue}{rgb}{.8, .8,1}
\definecolor{violet}{rgb}{0.9,0.6,0.9}
\definecolor{greenyellow}{rgb}{0.53,0.99,0.18}

\definecolor{Lyellow}{rgb}{0.87,0.87,0.87}
\definecolor{Lgray}{rgb}{0.92,0.92,0.92}
\definecolor{Mgray}{rgb}{0.5,0.5,0.5}
\definecolor{Gold}{rgb}{0.99,0.84,0.0}

\usepackage{siunitx}

\usepackage{amssymb}
\usepackage{graphicx}
\usepackage{amsthm}
\usepackage{mathtools}
\usepackage{tikz}

\usepackage{colortbl}

\graphicspath{{./figures/}}
\usepackage[colorinlistoftodos,prependcaption,textsize=tiny]{todonotes}
\usepackage[shortlabels]{enumitem}
\usepackage{mathrsfs}

\usepackage{color}
\usepackage{graphicx}
\usepackage{caption}
\usepackage{subcaption}

\usepackage[utf8]{inputenc}

\usepackage{pstricks}

\makeatletter

\renewcommand\section{\@startsection {section}{1}{\z@}
{-30pt \@plus -1ex \@minus -.2ex}
{2.3ex \@plus.2ex}
{\normalfont\normalsize\bfseries\boldmath}}

\renewcommand\subsection{\@startsection{subsection}{2}{\z@}
{-3.25ex\@plus -1ex \@minus -.2ex}
{1.5ex \@plus .2ex}
{\normalfont\normalsize\bfseries\boldmath}}

\renewcommand{\@seccntformat}[1]{\csname the#1\endcsname. }

\makeatother

\newtheorem{theorem}{Theorem}

\theoremstyle{definition}
\newtheorem{definition}{Definition}

\newtheorem{remark}{Remark}


\begin{document}

\begin{center}
\uppercase{\bf \boldmath Fixed Points of the Josephus Function via Fractional Base Expansions}
\vskip 20pt
{\bf Yunier Bello-Cruz
}\\
{\smallit Department of Mathematical Sciences, Northern Illinois University, DeKalb, Illinois, USA}\\
{\tt yunierbello@niu.edu}\\ 
\vskip 10pt
{\bf Roy Quintero-Contreras
}\\
{\smallit Department of Mathematical Sciences, Northern Illinois University, DeKalb, Illinois, USA}\\
{\tt rquinterocontreras@niu.edu}\\ 
\end{center}
\vskip 20pt
\vskip 30pt
\centerline{\bf Abstract}
\noindent In this paper, we investigate properties of the fixed point sequence of the Josephus function $J_3$. First, we establish a connection between this sequence and the Chinese Remainder Theorem. Next, we identify a clear numerical pattern for the digits of two consecutive fixed points when they are written in a non-standard fractional number system in base $3/2$. This result enables us to derive a recursive procedure for determining the digits of their base $3/2$ expansions.


%

\section{Introduction}\label{sec:introduction}
This work investigates properties of the Josephus function $J_3$, which arises from the classical Josephus problem, a well-known combinatorial puzzle originating from a narrative by the Jewish-Roman historian Flavius Josephus (see \cite[Book 3, Chapter 8, Part 7]{Fla}). The problem is described as follows: Given a group of $n$ people seated at a round table, numbered consecutively clockwise from $1$ to $n$, begin counting from position $1$, skip two people, and eliminate the third. Continue this process with the remaining participants, always starting with the next person, skipping two, and removing the third, proceeding clockwise until only one person remains. This person is called the survivor.

A natural question is where one should initially sit to avoid elimination. Let $J_3(n)$ denote the position of the survivor. Computing $J_3(n)$ directly for large $n$ can be computationally expensive, so it is useful to identify properties of $J_3$ that can be exploited; see \cite{Bel,Bel2}.

Next, we introduce the notion of extremal points and explain their role in the structure of $J_3$.
\begin{definition}[High extremal points]
A high extremal point is an integer $n_e$ such that
\[
J_3(n_e)\in\{n_e-1,n_e\}.
\]
High extremal points come in two types:
\begin{enumerate}[(a)]
\item If $J_3(n_e)=n_e$, then $n_e$ is a fixed point of $J_3$, denoted by $n_p$.
\item If $J_3(n_e)=n_e-1$, then $n_e$ is a pure high extremal point.
\end{enumerate}
\end{definition}
We denote by $\{n_p^{(\ell)}\}_{\ell\in\mathbb{N}}$ the fixed point sequence, and let $\overline{m}_{\ell}$ be the number of pure high extremal points between $n_p^{(\ell)}$ and $n_p^{(\ell+1)}$ for all $\ell$. These notions not only help characterize the behavior of $J_3$, but also provide insight into its recursive and discrete piecewise linear structure, as illustrated in Figure \ref{f1}.
\begin{figure}[!h]
\centering
\includegraphics[width=1\textwidth]{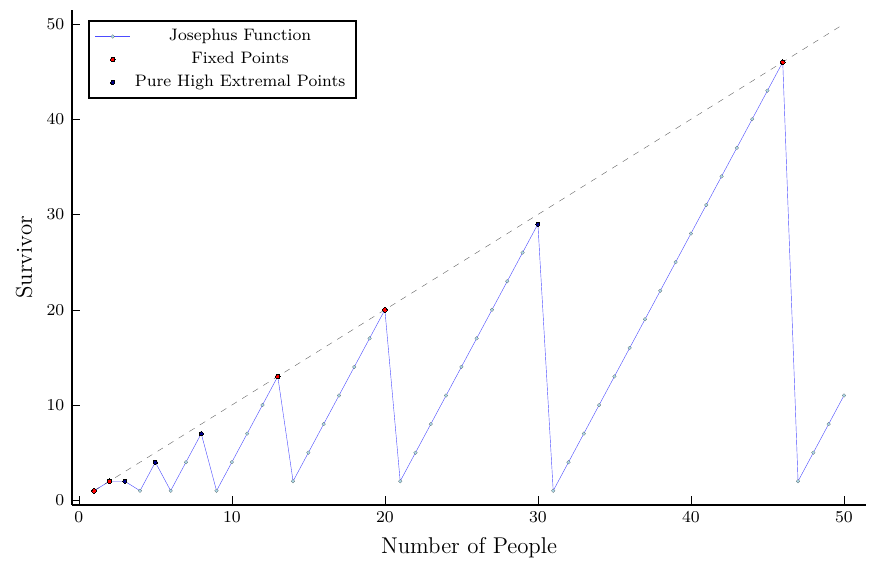}
\caption{Josephus function $J_{3}$ for $n\le 50$.}
\label{f1}
\end{figure}
Next, we outline the main results of this paper, organized into three subsections. We begin by motivating the main goal by recalling the well-known explicit formula for the fixed points of $J_2$. Subsection \ref{sub:fixed_points_of_j_3_and_the_chinese_remainder_theorem} connects the search for fixed points of $J_3$ with the Chinese Remainder Theorem by representing each fixed point as a solution of a system of congruences with pairwise coprime moduli. Subsection \ref{sub:a_modular_expansion_of_any_natural_number_in_base_3_2_} introduces a fractional number system in base $3/2$, following \cite{Aki,Ros}, and studies its relevance to fixed points. Finally, Subsection \ref{sub:characterization_of_n_p_ell_} uses this system to recursively determine the digits in the base $3/2$ expansion of each fixed point of $J_3$.

\section{Main Results and Motivations}\label{:main_results_and_motivations}

In this paper, we aim to derive a recursive formula for the digits of consecutive fixed points of $J_3$ in a suitable base. Such a formula was obtained in full for the special case of $J_2$ in \cite{Gra, Knu}. In this case, the fixed points are given by the explicit formula $n_p^{(\ell)} = 2^\ell - 1$, for all \( \ell \in \mathbb{N} \). Their expansions in base \( 2 \) exhibit a particularly simple recursive pattern. Specifically, if
$$n_p^{(\ell)} = (\hat{b}_k \cdots \hat{b}_0)_{2},$$ $\hat{b}_i\in \{0,1\}$ for all $i$, then the next fixed point satisfies
$$n_p^{(\ell+1)} = (\hat{b}_k \cdots \hat{b}_0\, 1)_{2},$$
that is, the digit \( 1 \) is appended to the right of the base $2$ expansion of $n_p^{(\ell)}$. Moreover, all digits \( \hat{b}_i\) are ones, as shown in the following table:
\begin{table}[!h]
\centering
\begin{tabular}{|c|c|l|}
\hline
\rowcolor[gray]{0.9} 
\( \ell \) & \( n_p^{(\ell)} = 2^\ell - 1 \) & Base $2$ Expansion \\ \hline
1 & 1 & \( (1)_2 \) \\ \hline
2 & 3 & \( (1\underline{\bf 1})_2 \) \\ \hline
3 & 7 & \( (11\underline{\bf 1})_2 \) \\ \hline
4 & 15 & \( (111\underline{\bf 1})_2 \) \\ \hline
5 & 31 & \( (1111\underline{\bf 1})_2 \) \\ \hline
6 & 63 & \( (11111\underline{\bf 1})_2 \) \\ \hline
7 & 127 & \( (111111\underline{\bf 1})_2 \) \\ \hline
8 & 255 & \( (1111111\underline{\bf 1})_2 \) \\ \hline
9 & 511 & \( (11111111\underline{\bf 1})_2 \) \\ \hline
10 & 1023 & \( (111111111\underline{\bf 1})_2 \) \\ \hline
\end{tabular}
\caption{The first ten values of \( n_p^{(\ell)}\) and their base $2$ expansions.}
\label{table:base2-expansion}
\end{table}

This elegant recurrence was the primary motivation for extending this result to $J_3$.

\subsection{Fixed points of $J_3$ and the Chinese Remainder Theorem}\label{sub:fixed_points_of_j_3_and_the_chinese_remainder_theorem}

We begin by recalling the recursive formula for the fixed point sequence \( \{n_p^{(\ell)}\}_{\ell \in \mathbb{N}} \) of the function \( J_3 \), as stated in~\cite[Theorem 2]{Bel}. Starting at \( n_p^{(1)} = 1 \), we compute
\begin{subequations}
\begin{align} \label{fixpointseq}
n_{p}^{(\ell+1)} &= \frac{3^{\overline{m}_{\ell}}(3n_{p}^{(\ell)} + 2) - 2^{\overline{m}_{\ell}}}{2^{\overline{m}_{\ell} + 1}}, \\
\label{m-l-formula}
\overline{m}_{\ell} &= \max \left\{ m \in \mathbb{Z}_+ \mid 2^m \text{ divides } 3n_p^{(\ell)} + 2 \right\},
\end{align}
\end{subequations}
for every \( \ell \in \mathbb{N} \).
From equation~\eqref{fixpointseq}, we can derive, after a few algebraic manipulations, the equivalent identity
\begin{equation}\label{eq-fixpointseq}
2^{\overline{m}_{\ell}}(2n_{p}^{(\ell+1)}+1)=3^{\overline{m}_{\ell}}(3n_{p}^{(\ell)} + 2),
\end{equation}
which tells us that $x_1=n_{p}^{(\ell+1)}$ and $x_2=n_{p}^{(\ell)}$ each satisfy the following system of linear congruences:
\begin{subequations}\label{formula 4}
\begin{empheq}[left=\empheqlbrace]{align}
2x_1 + 1 &\equiv 0 \pmod{3^{\overline{m}_{\ell}}} \label{formula 4-A} \\
3x_2 + 2 &\equiv 0 \pmod{2^{\overline{m}_{\ell}}}. \label{formula 4-B}
\end{empheq}
\end{subequations}
Moreover, if we apply the same argument to $x_1=n_{p}^{(\ell+2)}$ and $x_2=n_{p}^{(\ell+1)}$, then they each satisfy:
\begin{subequations}\label{formula 4'}
\begin{empheq}[left=\empheqlbrace]{align}
2x_1 + 1 &\equiv 0 \pmod{3^{\overline{m}_{\ell+1}}} \label{formula 4-C} \\
3x_2 + 2 &\equiv 0 \pmod{2^{\overline{m}_{\ell+1}}}. \label{formula 4-D}
\end{empheq}
\end{subequations}
Furthermore, since $n_{p}^{(\ell+1)}$ satisfies both \eqref{formula 4-A} and \eqref{formula 4-D}, it must also satisfy the following system of congruences:
\[
\left\{
\begin{aligned}
2x_1 + 1 &\equiv 0 \pmod{3^p} \\
3x_2 + 2 &\equiv 0 \pmod{2^q},
\end{aligned}
\right.
\]
where $p:=\overline{m}_{\ell}$ and $q:=\overline{m}_{\ell +1}$. Note that when $pq=0$, our argument does not apply. From this point on, we assume that $p, q \geq 1$. One can prove by induction on $p$ and $q$ that 
\[
\left\{
\begin{aligned}
2x_1 + 1 &\equiv 0 \!\!\!\! \pmod{3^p} 
&&\!\!\!\!\!\!\!\Longleftrightarrow x_1 \equiv a_1 := \frac{1}{2}(3^p - 1) \!\!\!\! \pmod{3^p} \\[0.36cm]
3x_2 + 2 &\equiv 0 \!\!\!\! \pmod{2^q} 
&&\!\!\!\!\!\!\!\Longleftrightarrow x_2 \equiv a_2 := 
\begin{cases} 
\dfrac{2}{3}(2^q - 1) \!\!\!\! \pmod{2^q}, & \text{if } q \equiv 0 \!\!\!\! \pmod{2} \\[0.36cm]
\dfrac{2}{3}(2^{q-1} - 1) \!\!\!\! \pmod{2^q}, & \text{if } q \equiv 1 \!\!\!\! \pmod{2}.
\end{cases}
\end{aligned}
\right.
\]
Therefore, the fixed point $n_{p}^{(\ell+1)}$ satisfies the following system of linear congruences: 
\begin{equation}\label{formula 6}
\begin{cases}
\ x_1 \equiv a_1 \pmod{3^{p}}  & \\
\ x_2 \equiv a_2 \pmod{2^{q}}. & 
\end{cases}
\end{equation}
Since $3^p$ and $2^q$ are coprime positive integers, i.e., $\gcd(3^p, 2^q)=1$, we can apply the classical form of the Chinese Remainder Theorem (CRT) (see \cite[p. 294]{Rosen}). This guarantees a unique solution $z$ to \eqref{formula 6} modulo $3^p \times 2^q$, given by $$z=a_12^qy+a_23^px,$$
where $x$ and $y$ are a pair of integers satisfying Bézout's identity (see \cite[p. 285]{Rosen}): 
\begin{equation}\label{Bezout-Id}3^px+2^qy=\gcd(3^p, 2^q)=1,\end{equation}
where $\gcd$ denotes the greatest common divisor.
In this way, we uncover a connection between the sequence $\{n_p^{(\ell)}\}_{\ell\in \mathbb{N}}$ and the CRT. 
Moreover, since the explicit expression for $z$ depends on a solution to Bézout's identity, it is natural to examine how $1$ can be expressed as a linear combination of $3^p$ and $2^q$.
\begin{table}[!h]
\centering
\begin{tabular}{|| c | l ||}
\hline
\cellcolor[gray]{0.89} $q$ & \cellcolor[gray]{0.89} Bézout's identity \\
\hline \hline
1 & 
\( 1 = 3^p (1) + 2^1 \left(\tfrac{1}{2}(1 - 3^p)\right), \quad \text{if } p \geq 1 \) \\[0.36cm]
\hline
2 & 
\(
1 =
\begin{cases}
3^p (1) + 2^2 \left( \tfrac{1}{4}(1 - 3^p) \right), & \text{if } p \equiv 0 \pmod{2} \\[0.36cm]
3^p (-1) + 2^2 \left( \tfrac{1}{4}(1 + 3^p) \right), & \text{if } p \equiv 1 \pmod{2}
\end{cases}
\) \\[0.36cm]
\hline
3 & 
\(
1 =
\begin{cases}
3^p (1) + 2^3 \left( \tfrac{1}{8}(1 - 3^p) \right), & \text{if } p \equiv 0 \pmod{2} \\[0.36cm]
3^p (3) + 2^3 \left( \tfrac{1}{8}(1 - 3^{p+1}) \right), & \text{if } p \equiv 1 \pmod{2}
\end{cases}
\) \\[0.36cm]
\hline
4 & 
\(
1 =
\begin{cases}
3^p (1) + 2^4 \left( \tfrac{1}{16}(1 - 3^p) \right), & \text{if } p \equiv 0 \pmod{4} \\[0.36cm]
3^p (-5) + 2^4 \left( \tfrac{1}{16}(1 + 5 \cdot 3^p) \right), & \text{if } p \equiv 1 \pmod{4} \\[0.36cm]
3^p (-7) + 2^4 \left( \tfrac{1}{16}(1 + 7 \cdot 3^p) \right), & \text{if } p \equiv 2 \pmod{4} \\[0.36cm]
3^p (3) + 2^4 \left( \tfrac{1}{16}(1 - 3^{p+1}) \right), & \text{if } p \equiv 3 \pmod{4}
\end{cases}
\) \\[0.36cm]
\hline
5 & $1=\begin{cases} 3^p (1) +2^5 (\frac{1}{32}(1-3^{p})), & \text{ if $p \equiv 0 \pmod {8}$ } \\[0.36cm]
  3^p (11) +2^5 (\frac{1}{32}(1-11\cdot 3^p)), & \text{ if $p \equiv 1 \pmod {8}$ } \\[0.36cm]
  3^p (-7) +2^5 (\frac{1}{32}(1+7 \cdot 3^{p})), & \text{ if $p \equiv 2 \pmod {8}$ } \\[0.36cm]
  3^p (-13) +2^5 (\frac{1}{32}(1+13 \cdot 3^p)), & \text{ if $p \equiv 3 \pmod {8}$ } \\[0.36cm]
  3^p (-15) +2^5(\frac{1}{32}(1+5 \cdot 3^{p+1})), & \text{ if $p \equiv 4 \pmod {8}$ } \\[0.36cm]
  3^p (-5) +2^5(\frac{1}{32}(1+5\cdot 3^p)), & \text{ if $p \equiv 5 \pmod {8}$ } \\[0.36cm]
  3^p (9) +2^5 (\frac{1}{32}(1- 3^{p+2})), & \text{ if $p \equiv 6 \pmod {8}$ } \\[0.36cm]
  3^p (3) +2^5 (\frac{1}{32}(1- 3^{p+1})), & \text{ if $p \equiv 7 \pmod {8}$ }
  \end{cases}$ \\[0.36cm] 
  \hline
\end{tabular}
\caption{Bézout's combinations for any \( p\in\mathbb{N} \) and \( q = 1, 2, 3, 4, 5 \).}
\label{table 1}
\end{table}

Table \ref{table 1} provides Bézout's linear combinations for all $p\in\mathbb{N}$ and $q = 1, 2, 3, 4, 5$ in a compact form.
We invite the interested reader to extend Table \ref{table 1} to other values of $q$.
\begin{remark}
To illustrate the preceding discussion, we present two representative examples from Table \ref{table 2}.

The first example corresponds to \( n_{p}^{(17)} = 3\,986\,218 \) and \( n_{p}^{(18)} = 102\,162\,424 \) given in \cite[Table 1]{Bel2} or Table \ref{table 2}, where $p=7$ (i.e., $\overline{m}_{17}=7$) and $q=1$ (i.e., $\overline{m}_{18}=1$). According to \eqref{formula 6} and Table \ref{table 1}, \( a_1 = 1093 \), \( a_2 = 0 \), and $x=1$, $y=-1093$. Therefore,
$$z= 1093\times 2^1 \times (-1093) + 0 \times 3^7 \times 1=-2389298 \equiv 3280 \pmod{4374}.$$
Thus, $n_{p}^{(18)}=102\,162\,424$ must be congruent to $3280$ modulo $4374$, which is indeed the case since $(102\,162\,424 - 3280)/4374=23356$.

The second example corresponds to \( n_{p}^{(13)} = 46084 \) and \( n_{p}^{(14)} = 103690 \); see Table \ref{table 2} below. In this case, \( p = 1 \) (i.e., \( \overline{m}_{13} = 1 \)) and \( q = 5 \) (i.e., \( \overline{m}_{14} = 5 \)). Here, \( a_1 = 1 \), \( a_2 = 10 \) in \eqref{formula 6}, and \( x = 11 \), \( y = -1 \) for Bézout's identity \eqref{Bezout-Id} since
\[ 3^1 \times 11 + 2^5 \times (-1) = 33 - 32 = 1. \]
So,
\[
z = 1 \times 2^5 \times (-1) + 10 \times 3^1 \times 11 = 298 \equiv 10 \pmod{96}.
\]
Hence, \( n_{p}^{(14)}= 103690 \) must be congruent to $10$ modulo $96$. Indeed, $(103690 - 10)/96 = 1080$.
\end{remark}

\subsection{Expansions in base $3/2$}\label{sub:a_modular_expansion_of_any_natural_number_in_base_3_2_}
We begin this subsection by noting that, by algebraic manipulations of the right-hand side of~\eqref{fixpointseq}, we obtain the following expression:
\begin{equation}\label{formula 7}
n_{p}^{(\ell+1)} = \left(\frac{3}{2}\right)^{\overline{m}_{\ell}+1} n_{p}^{(\ell)} + \left(\frac{3}{2}\right)^{\overline{m}_{\ell}} - \frac{1}{2},
\end{equation}
which provides motivation for studying representations of natural numbers in base $3/2$. Inspired by \eqref{formula 7} and the basic conceptual framework presented in the introduction of \cite[Section 3]{Ros}, we move from base $-3/2$ to base $3/2$, which will allow us to characterize the fixed points of $J_3$ in a much simpler way. Here, we do not refer to the well-known fractional number system in base $3/2$ with digits $0$ or $1$. To illustrate how such a system works, let us represent the natural number $4$ in base $3/2$. A natural approximation is $1000.01001001_{3/2}$, since
$$\left(\frac{3}{2}\right)^3+\left(\frac{3}{2}\right)^{-2}+\left(\frac{3}{2}\right)^{-5}+\left(\frac{3}{2}\right)^{-8}=3.99015012955.$$
Instead of relying on that system, we aim to represent any positive integer $N$ in the form:
\begin{equation}\label{expansion}
 N= \frac{1}{2}\left [d_k \left(\frac{3}{2} \right)^{k}+ d_{k-1} \left(\frac{3}{2} \right)^{k-1} + \cdots + d_1 \left(\frac{3}{2} \right)^1+d_0\right],   
\end{equation}
where $d_i \in \mathcal{D}=\{0, 1, 2\}$ for some $k\in \mathbb{Z}_+$. If $k\geq 1$, we always assume that $d_k \neq 0$. When $N$ has an expansion of the form \eqref{expansion}, we denote
$N=(d_k\cdots d_0)_{3/2}.$
Note that this expansion allows the digit $2$, which is not permitted in the standard base $3/2$ representation. Indeed, the expansion of the number $4$ in the form of \eqref{expansion} is $(212)_{3/2}$, since
\[
\frac{1}{2}\left[2\left(\frac{3}{2}\right)^{2}+1\left(\frac{3}{2}\right)^{1}+2\right]=4.
\]

We now describe the process for computing an expansion of the form \eqref{expansion} via the following four-step algorithm, presented in \cite[p.~61]{Aki}:

\medskip

\noindent \underline{\bf Step 1}: Set $N_0 = N$ and write $2N_0=3N_1 +d_0$ with $d_0 \in \mathcal{D}$ and $N_1 \in \mathbb{Z}_+$. Note that $d_0$ is the unique digit in $\mathcal{D}$ satisfying $d_0 \equiv 2N_0 \pmod{3}$, and thus $N_1$ is also uniquely determined.

\medskip
    
\noindent \underline{\bf Step 2}: Define the non-negative integers $N_1, N_2, \ldots$ recursively by $2N_i=3N_{i+1} +d_i$,
    with $d_i \in \mathcal{D}$ uniquely defined by $d_i \equiv 2N_i \pmod{3}$.

\medskip
    
\noindent \underline{\bf Step 3}: We prove by induction on $i$ that 
    \begin{equation}\label{formula 9}
        N=\left(\frac{3}{2} \right)^{i+1} N_{i+1}+\frac{1}{2}\left [d_i \left(\frac{3}{2} \right)^{i}+ \cdots + d_1 \left(\frac{3}{2} \right)+d_0\right].
    \end{equation}
We verify the base case $i=0$: from {\bf Step 1}, $$N=N_0=\frac{3}{2}N_1+\frac{1}{2}d_0,$$ which confirms that \eqref{formula 9} holds for $i=0$.
Now assume that \eqref{formula 9} holds for $i=k\geq 0$.
From {\bf Step 2}, we have 
$$2N_{k+1}=3N_{k+2} +d_{k+1}.$$ Substituting this into the inductive hypothesis gives:
\begin{align*}
    N&=\left(\frac{3}{2} \right)^{k+1} N_{k+1}+\frac{1}{2}\left [d_k \left(\frac{3}{2} \right)^{k}+ \cdots + d_1 \left(\frac{3}{2} \right)+d_0\right] \\
     &=\left(\frac{3}{2} \right)^{k+1}\left(\frac{3}{2}N_{k+2}+\frac{1}{2}d_{k+1} \right) +\frac{1}{2}\left [d_k \left(\frac{3}{2} \right)^{k}+ \cdots + d_1 \left(\frac{3}{2} \right)+d_0\right] \\    
     &=\left(\frac{3}{2} \right)^{k+2} N_{k+2}+\frac{1}{2}\left [d_{k+1} \left(\frac{3}{2} \right)^{k+1}+ \cdots + d_1 \left(\frac{3}{2} \right)+d_0\right],
\end{align*}
which proves that \eqref{formula 9} holds for $i=k+1$. Thus, by induction, equation \eqref{formula 9} holds for all $i\geq 0$.

\medskip

\noindent\underline{\bf Step 4}: We show that $N_{\hat i} = 0$ for some $\hat i$, ensuring that the algorithm terminates and yields a representation of the form \eqref{expansion}.

From the recurrence relation $N_{i+1} = \frac{2}{3}N_i - \frac{1}{3}d_i \leq \frac{2}{3}N_i$, the sequence $\{N_i\}$ is strictly decreasing whenever $N_i \geq 3$. This guarantees that, after finitely many steps, we reach a value $N_{i_0} \in \{1, 2\}$ for some $i_0 \geq 1$. We consider two cases: (i) if $N_{i_0} = 2$, then $N_{i_0+1} = 1$ and $N_{i_0+2} = 0$, so the expansion terminates at $\hat i=i_0+2$; (ii) if $N_{i_0} = 1$, then $N_{i_0+1} = 0$, hence $\hat i=i_0+1$.

Therefore, for any $N = N_0 \in \mathbb{N}$, the sequence $\{N_i\}$ eventually reaches zero, confirming that $N$ has a finite expansion of the form \eqref{expansion}.
Since each digit $d_i$ is uniquely determined by $d_i \equiv 2N_i \pmod{3}$ and must lie in $\mathcal{D} = \{0,1,2\}$, the expansion is unique.

The following table displays the base $3/2$ expansions of the form \eqref{expansion} for the first twenty fixed points of $J_3$.
\begin{table}[!h]
\centering
\begin{tabular}{||c|r|c|l||}
\hline
\cellcolor[gray]{0.89} $\ell$ & 
\cellcolor[gray]{0.89} $n_p^{(\ell)}$ & 
\cellcolor[gray]{0.89} $\overline{m}_{\ell}$ & 
\cellcolor[gray]{0.89} $3/2$ expansion \\
\hline \hline
1  & 1          & 0 & \( (2)_{3/2} \) \\
2  & 2          & 3 & \( (2\underline{\bf 1})_{3/2} \) \\
3  & 13         & 0 & \( (21\underline{\bf 0112})_{3/2} \) \\
4  & 20         & 1 & \( (210112\underline{\bf 1})_{3/2} \) \\
5  & 46         & 2 & \( (2101121\underline{\bf 02})_{3/2} \) \\
6  & 157        & 0 & \( (210112102\underline{\bf 012})_{3/2} \) \\
7  & 236        & 1 & \( (210112102012\underline{\bf 1})_{3/2} \) \\
8  & 532        & 1 & \( (2101121020121\underline{\bf 02})_{3/2} \) \\
9  & 1198       & 2 & \( (210112102012102\underline{\bf 02})_{3/2} \) \\
10 & 4045       & 0 & \( (21011210201210202\underline{\bf 012})_{3/2} \) \\
11 & 6068       & 1 & \( (21011210201210202012\underline{\bf 1})_{3/2} \) \\
12 & 13654      & 2 & \( (210112102012102020121\underline{\bf 02})_{3/2} \) \\
13 & 46084      & 1 & \( (21011210201210202012102\underline{\bf 012})_{3/2} \) \\
14 & 103690     & 5 & \( (21011210201210202012102012\underline{\bf 02})_{3/2} \) \\
15 & 1181101    & 0 & \( (2101121020121020201210201202\underline{\bf 011112})_{3/2} \) \\
16 & 1771652    & 1 & \( (2101121020121020201210201202011112\underline{\bf 1})_{3/2} \) \\
17 & 3986218    & 7 & \( (21011210201210202012102012020111121\underline{\bf 02})_{3/2} \) \\
18 & 102162424 & 1& \( (2101121020121020201210201202011112102\underline{\bf 01111112})_{3/2} \) \\
19 & 229865455 &0 & \( (210112102012102020121020120201111210201111112\underline{\bf 02})_{3/2} \) \\
20 & 344798183 & 0& \( (21011210201210202012102012020111121020111111202\underline{\bf 1})_{3/2} \) \\
\hline
\end{tabular}
\caption{Values of \( n_p^{(\ell)} \), \( \overline{m}_{\ell} \), and the corresponding base $3/2$ expansions for \( \ell = 1, \ldots, 20 \).}
\label{table 2}
\end{table}
\subsection{Characterization of $\{n_p^{(\ell)}\}_{\ell\in \mathbb{N}}$ through the base $3/2$ number system} \label{sub:characterization_of_n_p_ell_}

Observing the base $3/2$ expansions of the first twenty fixed points of $J_3$ (see Table \ref{table 2}), we are led to believe that there is a natural connection between these expansions and the divisibility condition that defines the sequence $\{\overline{m}_{\ell}\}_{\ell\in \mathbb{N}}$ (see equation \eqref{m-l-formula}). In particular, we have underlined some of the trailing digits of the expansions, as shown in Table \ref{table 2}.

Indeed, Table \ref{table 2} exhibits several notable features:
\begin{enumerate}
\item The base $3/2$ expansion of a fixed point $n_p^{(\ell)}$ appears as the initial segment of the base $3/2$ expansion of the next fixed point $n_p^{(\ell+1)}$.
\item The number of digits in the base $3/2$ expansion of $n_p^{(\ell+1)}$ equals the number of digits in the expansion of $n_p^{(\ell)}$ plus $\overline{m}_{\ell}+1$, where $\overline{m}_{\ell}$ is the number of pure high extremal points between them.
\item The last $\overline{m}_{\ell}+1$ digits in the base $3/2$ expansion of $n_p^{(\ell+1)}$ are determined as follows:
\begin{enumerate}
\item The digit $1$ is appended if $\overline{m}_{\ell}=0$.
\item The digits $02$ are appended if $\overline{m}_{\ell}=1$.
\item The digit strings $012$, $0112$, $01112$, $\ldots$ are appended if $\overline{m}_{\ell}=2,3,4,\ldots$, respectively.
\end{enumerate}
\end{enumerate}
We now state and prove the main result of this work, which confirms and formalizes these observations.

\begin{theorem}
Assume that the base $3/2$ expansion of the fixed point $n_p^{(\ell)}$ of the Josephus function $J_3$ is given by
$n_p^{(\ell)} = (\hat{d}_k \cdots \hat{d}_{0})_{3/2}.$
Then, the following statements hold:
    \item[ {\bf (i)}] If $\overline{m}_{\ell}=0$, then $n_p^{(\ell+1)}=(\hat{d}_k \cdots \hat{d}_{0} \, 1)_{3/2}$.
    \item[ {\bf (ii)}] If $\overline{m}_{\ell}= 1$, then $n_p^{(\ell+1)}=(\hat{d}_k \cdots \hat{d}_{0} \, 0 \, 2)_{3/2}$.
    \item[ {\bf (iii)}] If $\overline{m}_{\ell}\geq 2$, then $n_p^{(\ell+1)}=(\hat{d}_k \cdots \hat{d}_{0} \, 0 \! \underbrace{1 \cdots 1}_{\text{$(\overline{m}_{\ell}-1)$ ones}} \! 2)_{3/2}$.
    \item[ {\bf (iv)}] The total number of digits in the base $3/2$ expansion of $n_p^{(\ell+1)}$ equals the number of digits in the base $3/2$ expansion of $n_p^{(\ell)}$ plus $\overline{m}_{\ell}+1$.
\end{theorem}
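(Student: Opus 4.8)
The plan is to reduce all four statements to a single observation about how the value of a base $3/2$ expansion changes when digits are appended on the right, and then to match that against the recurrence \eqref{formula 7}.

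First I would record the following elementary ``append'' identity. If $N=(d_k\cdots d_0)_{3/2}$ and $e_{s-1},\dots,e_0\in\mathcal{D}$ are any digits, then the string obtained by appending $e_{s-1}\cdots e_0$ on the right has value
\begin{equation*}
(d_k\cdots d_0\,e_{s-1}\cdots e_0)_{3/2}=\left(\frac{3}{2}\right)^{s} N+\frac12\sum_{j=0}^{s-1} e_j\left(\frac{3}{2}\right)^{j}.
\end{equation*}
This is immediate from the defining formula \eqref{expansion}: appending $s$ digits raises the exponent of every original digit by $s$, which factors out $(3/2)^s$ and reproduces $N$, while the new digits contribute the displayed tail.

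Next, writing $m:=\overline{m}_\ell$ and taking $s=m+1$, I would compare this with \eqref{formula 7}. The append identity produces exactly $n_p^{(\ell+1)}$ precisely when the appended block satisfies
\begin{equation*}
\frac12\sum_{j=0}^{m} e_j\left(\frac{3}{2}\right)^{j}=\left(\frac{3}{2}\right)^{m}-\frac12,\qquad\text{equivalently}\qquad \sum_{j=0}^{m} e_j\left(\frac{3}{2}\right)^{j}=2\left(\frac{3}{2}\right)^{m}-1.
\end{equation*}
It then remains to check that the specific blocks in (i)--(iii) meet this condition. For $m=0$ the block is $e_0=1$; for $m=1$ it is $e_1e_0=02$; for $m\ge 2$ it is $0\underbrace{1\cdots1}_{m-1}2$, that is $e_m=0$, $e_{m-1}=\cdots=e_1=1$, $e_0=2$. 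The first two are one-line verifications, and the third reduces to the finite geometric sum $\sum_{j=1}^{m-1}(3/2)^j=2(3/2)^m-3$, whence $2(3/2)^m-3+2=2(3/2)^m-1$, as required.

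Finally I would invoke uniqueness of the expansion. Since the leading digit $\hat{d_k}$ of $n_p^{(\ell)}$ is nonzero (by the normalization in \eqref{expansion}) and every appended digit lies in $\mathcal{D}$, the concatenated string is a legitimate base $3/2$ expansion; and by the computation above its value equals $n_p^{(\ell+1)}$. By the uniqueness established in {\bf Step 4}, this concatenation is \emph{the} expansion of $n_p^{(\ell+1)}$, which settles (i)--(iii). Statement (iv) is then a bookkeeping corollary, since in each case the number of appended digits is $m+1=\overline{m}_\ell+1$. The only genuine subtlety---and the main obstacle---is pinning down that the claimed blocks are the correct ones and that no carrying or leading-zero issue spoils the concatenation; once the append identity and the geometric-sum evaluation are in hand, these concerns dissolve and uniqueness does the rest.
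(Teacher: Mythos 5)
Your proof is correct, but it takes a genuinely different route from the paper's. The paper argues \emph{bottom-up}: it runs the digit-extraction algorithm of Steps 1--2 directly on $N_0=n_p^{(\ell+1)}$, using the identity \eqref{eq-fixpointseq} and the auxiliary integers $A_i=(3n_p^{(\ell)}+2)/2^{\overline{m}_{\ell}-i}$ to compute each remainder in turn ($d_0=2$, then $d_1=\cdots=d_{\overline{m}_{\ell}-1}=1$, then $d_{\overline{m}_{\ell}}=0$), terminating at $N_{\overline{m}_{\ell}+1}=n_p^{(\ell)}$; since the algorithm itself \emph{defines} the canonical expansion, no appeal to uniqueness is needed. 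You instead argue \emph{top-down}: the append identity plus the recurrence \eqref{formula 7} reduce everything to checking that the block's value is $2(3/2)^{\overline{m}_{\ell}}-1$, a single geometric sum (which you evaluate correctly in all three cases), and then uniqueness of the representation seals the claim. Your route is shorter and more illuminating --- it explains \emph{why} the block $0\,1\cdots 1\,2$ appears, rather than just watching it emerge from modular steps --- but it genuinely leans on uniqueness, which the paper only asserts tersely at the end of Step 4. Strictly, that assertion concerns the algorithm's output; to apply it to your concatenated string you need the small supplementary observation that \emph{any} representation of the form \eqref{expansion} with digits in $\mathcal{D}$ must satisfy $d_0\equiv 2N \pmod 3$ (clear denominators by multiplying by $2^{k+1}$ and reduce mod $3$), so by induction it coincides with the algorithm's output. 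That is a one-line patch, not a gap. Incidentally, your count in (iv) --- exactly $\overline{m}_{\ell}+1$ appended digits --- is the correct one, and it quietly corrects a slip in the paper's own proof of (iv), which says the digit count increases ``by exactly $\overline{m}_{\ell}$.''
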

\begin{proof}
To prove {\bf (i)}, let $\overline{m}_{\ell}=0$. Then, by \eqref{eq-fixpointseq}, $$2n_p^{(\ell+1)}=3n_p^{(\ell)}+1.$$ So $d_0= 1$, and in the induction formula \eqref{formula 9} with $N=n_p^{(\ell+1)}$ and $N_1=n_p^{(\ell)}$, this proves {\bf (i)}.

To prove {\bf (ii)} and {\bf (iii)}, assume that $\overline{m}_{\ell}\geq 1$. Set $N_0=n_p^{(\ell+1)}$ and define
$$A_i=\dfrac{3n_p^{(\ell)}+2}{2^{\overline{m}_{\ell}-i}}$$
for $i=0, 1, \ldots, \overline{m}_{\ell}$. Note that the definition of $\overline{m}_\ell$ in \eqref{m-l-formula} implies that $A_i\in \mathbb{N}$ for all $i=0, 1, \ldots, \overline{m}_{\ell}$. Then, by \eqref{eq-fixpointseq},
$$2N_0=3(3^{\overline{m}_{\ell}-1}A_0)-1=3(3^{\overline{m}_{\ell}-1}A_0-1)+2.$$
So,
$$d_0={\rm mod}(2N_{0}, 3)=2 \quad \mbox{and} \quad N_1=3^{\overline{m}_{\ell}-1}A_0-1$$
by the recursion \eqref{formula 9}. If $\overline{m}_{\ell}=1$,
$$2N_{1}=2A_{0}-2=A_{1}-2=3n_p^{(\ell)}.$$
Therefore, $d_{1}={\rm mod}(2N_{1},3)=0$, and the digits $2$ and $0$ are appended to the right of the base $3/2$ expansion of $N_{2}=n_p^{(\ell)}$ to represent $n_p^{(\ell+1)}$, establishing {\bf (ii)}.

If $\overline{m}_{\ell}\ge 2$, we repeat the algorithm $\overline{m}_{\ell}-1$ times starting at:

\noindent\underline{Step 1}: We have
$$2N_1=3(3^{\overline{m}_{\ell}-2}A_1)-2=3(3^{\overline{m}_{\ell}-2}A_1-1)+1.$$
Then,
$$d_1={\rm mod}(2N_1,3)=1 \quad \mbox{and} \quad N_2=3^{\overline{m}_{\ell}-2}A_1-1.$$

\noindent Continuing in this way, at \underline{Step $\overline{m}_{\ell}-1$} we obtain
$$2N_{\overline{m}_{\ell}-1}=3A_{\overline{m}_{\ell}-1}-2=3(A_{\overline{m}_{\ell}-1}-1)+1,$$
hence,
$$d_{\overline{m}_{\ell}-1}={\rm mod}(2N_{\overline{m}_{\ell}-1},3)=1 \quad \mbox{and} \quad N_{\overline{m}_{\ell}}=A_{\overline{m}_{\ell}-1}-1.$$
Finally, applying the algorithm once more gives
$$2N_{\overline{m}_{\ell}}=2A_{\overline{m}_{\ell}-1}-2=A_{\overline{m}_{\ell}}-2=3n_p^{(\ell)},$$
so
$$d_{\overline{m}_{\ell}}=0 \quad \mbox{and} \quad N_{\overline{m}_{\ell}+1}=n_p^{(\ell)},$$
proving {\bf (iii)}.

Item {\bf (iv)} follows directly from {\bf (i)}, {\bf (ii)}, and {\bf (iii)}, since each case increases the digit count by exactly $\overline{m}_{\ell}+1$.
\end{proof}

This result is a natural generalization of the explicit formula for the fixed points of $J_2$, discussed at the beginning of this section. Moreover, \cite{Gra, Knu} show that the binary expansion of $n$ can be used to compute $J_2(n)$ via a digit shift. Specifically, if $n=(b_k\cdots b_0)_2$ and $j:=\max\left\{j\in \{k-1,\ldots,0\} \mid b_j\neq 0\right\}$, then $J_2(n)=(b_{j} \cdots b_0 b_k)_{2}$.

Two directions remain open. First, whether the base $3/2$ expansions of $n$ and $n_p^{(\ell+1)}$, together with $\overline{m}_\ell$, can be used to compute $J_3(n)$ directly; equations (9) and (10) in \cite{Bel} may provide a starting point. Second, whether the fixed points of $J_4$ admit a recursive digit formula in base $4/3$; such a representation could reveal structural patterns analogous to those found for $J_3$ in base $3/2$, which in turn generalize the case of $J_2$ in base $2$.

\section{Concluding Remarks}\label{sec:concluding_remarks}

In this paper, we studied the fixed point sequence of the Josephus function $J_3$ and showed how its structure can be better understood through number theory and a fractional base representation. First, we used the Chinese Remainder Theorem to describe each fixed point as the solution of a system of two linear congruences, exploiting the fact that the moduli involved are coprime. Then, we identified a numerical pattern in the base $3/2$ expansions of the fixed points and provided a recursive formula for computing the digits of these expansions.

This connection offers a new and transparent way to describe the fixed points of $J_3$ and opens the door to further analysis using tools from the theory of fractional base representations. The use of base $3/2$ provides a natural perspective for understanding the structure of the fixed points and suggests several directions for future research.

One such direction is to investigate whether the fixed points of $J_4$ admit a recursive digit formula in base $4/3$, potentially revealing patterns analogous to those found for $J_3$ and $J_2$. However, $J_4$ has two distinct types of pure high extremal points, which may require at least two indices to track the number of pure high extremal points between consecutive fixed points.

Another open direction is to determine whether $J_3(n)$ itself can be computed directly from the base $3/2$ expansion of $n$. The expression for $J_3(n)$ given in equations (9) and (10) of \cite{Bel} involves two indices, which complicates the derivation of a recursion for the base $3/2$ expansion of $J_3(n)$.

\vskip 20pt\noindent {\bf Acknowledgements.}
The first author acknowledges the support of the National Science Foundation (NSF) grant \#DMS-2307328. The authors are grateful to the anonymous referees for their valuable comments and suggestions, which improved the quality of the paper.

%


\begin{thebibliography}{1}\footnotesize

\bibitem{Aki}
S. Akiyama, C. Frougny, and J. Sakarovitch, Powers of rationals modulo 1 and rational base number systems, {\it Israel J. Math.} {\bf 168} (2008), 53--91.

\bibitem{Bel}
Y. Bello-Cruz and R. Quintero-Contreras, On the recurrence formula for fixed points of the Josephus function, {\it Integers} {\bf 24} (2024), \#A115.

\bibitem{Bel2}
Y. Bello-Cruz and R. Quintero-Contreras, Analytical study and efficient evaluation of the Josephus function, {\it J. Integer Seq.} {\bf 27} (3) (2024), 24.3.8.

\bibitem{Fla}
F. Josephus, {\it The Jewish War}, M. Hammond, trans., Oxford Univ. Press, Oxford, 2017.

\bibitem{Gra}
R. Graham, D. Knuth, and O. Patashnik, {\it Concrete Mathematics: A Foundation for Computer Science}, 2nd ed., Addison--Wesley, New Jersey, 1994.

\bibitem{Knu}
D. E. Knuth, {\it The Art of Computer Programming}, 3rd ed., Addison--Wesley, 1997.

\bibitem{Ros}
L. Rossi and J. M. Thuswaldner, A number system with base $-\tfrac{3}{2}$, {\it Amer. Math. Monthly} {\bf 129} (6) (2022), 539--553.

\bibitem{Rosen}
K. Rosen, {\it Discrete Mathematics and Its Applications}, 8th ed., McGraw--Hill Education, New York, 2019.

\end{thebibliography}
\end{document}